\theoremstyle{plain}
\newtheorem{thm}{Theorem}[section]
\newtheorem{prop}[thm]{Proposition}
\newtheorem{cor}[thm]{Corollary}
\newtheorem{obs}[thm]{Observation}
\newtheorem{quest}[thm]{Question}
\newtheorem*{thm*}{Theorem}
\newtheorem*{cor*}{Corollary}
\theoremstyle{definition}
\newtheorem{rem}[thm]{Remark}
\author{Nathan Bowler}
\address{Nathan Bowler, Universität Hamburg, Department of Mathematics, Bundesstra{\ss}e 55 (Geomatikum), 20146 
	Hamburg, Germany}
\email{nathan.bowler@uni-hamburg.de}
\author{Florian Gut}
\address{Florian Gut, Universität Hamburg, Department of Mathematics, Bundesstra{\ss}e 55 (Geomatikum), 20146 Hamburg, 
	Germany}
\email{florian.gut@uni-hamburg.de}
\author{Attila Jo{\'o}}
\thanks{The third author would like to thank the generous support of the Alexander 
	von Humboldt Foundation and NKFIH 
	OTKA-129211}
\address{Attila Jo{\'o}, Universität Hamburg, Department of Mathematics, Bundesstra{\ss}e 55 (Geomatikum), 20146 
	Hamburg, Germany and  Alfr\'{e}d R\'{e}nyi Institute of Mathematics, Logic, Set theory and topology department,  13-15 
	Re\'{a}ltanoda St., 
	Budapest, Hungary}
\email{attila.joo@uni-hamburg.de\\ jooattila@renyi.hu}
\author{Max Pitz}
\address{Max Pitz, Universität Hamburg, Department of Mathematics, Bundesstra{\ss}e 55 (Geomatikum), 20146 Hamburg, 
	Germany}
\email{max.pitz@uni-hamburg.de}
\begin{document}
	
	\title{Maker-Breaker games on \texorpdfstring{$ K_{\omega_1}$}{Kw1}  and \texorpdfstring{$K_{\omega,\omega_1}$}{Kww1} }
	\begin{abstract}
		We investigate Maker-Breaker games on graphs of size $\aleph_1$ in which Maker's goal is to build a copy of the host 
		graph. We establish a firm dependence of the outcome of the game on the axiomatic framework.
		Relating to this, we prove that there is a winning strategy for Maker in the $K_{\omega,\omega_1}$-game under 
		ZFC+MA+$\neg$CH  and a winning strategy for Breaker under ZFC+CH.
		We prove a similar result for the $K_{\omega_1}$-game.
		Here, Maker has a winning strategy under ZF+DC+AD, while Breaker has one under ZFC+CH again.
	\end{abstract}
	\maketitle
	\section{Introduction}
	Games on graphs are a very natural concept and so it is no wonder that this field has emerged jointly with graph theory as a whole.
	For finite boards one often considers strong games, i.e. where two players interchangeably colour edges of a finite graph $G$ with the aim to be the first player to have some previously agreed upon graph contained as a subgraph in the graph induced by their respective coloured edges.
	Another important kind of games is the so-called ``Maker-Breaker games''.
	A typical setup for such games on  (potentially infinite) graphs is the following: at each turn, Maker claims an edge of $ G $ (not previously claimed by either player) after which Breaker claims an unclaimed edge.
	There is either a fixed number of turns or they play until the whole edge set is distributed. The set of winning sets of Maker is public knowledge and usually has some combinatorial description. 
	Classical games of this type are for example the Shannon switching game, in which Maker's goal is to connect two vertices by a path (see \cite{Lehman_1964}), and the game where Maker's goal is to build a spanning tree (see \cite{chvatal1978biased}) or more generally a base of a matroid (see \cite{lgorzata2005biased}). 
	
	For recent results about Maker-Breaker games on infinite graphs  we refer to \cite{NICHOLASDAY2021482} and \cite{bowler2020maker}.
	Some games (like the base-exchange game in \cite{aharoni1991bases}) can be phrased more naturally in the language of infinite matroids.
	It is worth mentioning that Maker-Breaker games have been investigated in an even more abstract settings as well (see \cite{CN1979}).
	
	For graphs $G$ and $H$, let $\mathfrak{MB}(G, H) $ denote the \emph{Maker-Breaker game} where $G$ (more precisely the set of edges) is the board, there are turns (indexed by ordinals) each of which begins with Maker claiming a previously unclaimed edge, after which Breaker does likewise.
	The game terminates when all the edges are claimed and Maker wins if and only if at the end of the game the subgraph $ G_M $ of $ G $ induced by the edges claimed by Maker contains a subgraph isomorphic to $ H $.
	Let us recall that a graph $ G $ is an ordered pair $ (V,E) $ with $ E\subseteq [V]^{2} $ where $ V $ is called the vertex set and $ E $ is the edge set of $ G $.
	The complete graph on $ \kappa $ is $ K_\kappa:=(\kappa, [\kappa]^{2}) $.
	The complete bipartite graph with vertex classes of size $ \lambda $ and $ \kappa $ is denoted by  $ K_{\lambda,\kappa} $, its vertex set is $ (\lambda \times \{ 0 \})\cup (\kappa\times \{ 1 \}) $ and its edge set is $ \{ (\alpha,0), (\beta, 1):\ \alpha<\lambda, \beta<\kappa \} $.
	Note that the vertex sets of these graphs are already well-ordered, and so we generally do not need to invoke the axiom of choice.
	It was shown that in the game $ \mathfrak{MB}(K_{\omega}, K_{\omega}) $ Maker has a winning strategy (see \cite{bowler2020maker}).
	In this note we analyse similar games on uncountable graphs.
	
	Note that each outcome of the game defines a 2-colouring  of $E(G)$.
	This suggests a possible connection to Ramsey type problems, although in the current context the colourings in question are not arbitrary but are produced by players with particular goals in mind.
	There are colourings of the edges of a $K_{\omega_1}$ with two colours without any monochromatic $K_{\omega_1}$ in ZFC (see~\cite{sierpinski1933}), but if instead of the axiom of choice one assumes DC+AD, then there is always a monochromatic $K_{\omega_1}$ because $\omega_1$ becomes measurable (see \cite{kanamori2008higher}*{Theorem 28.2})  and hence weakly compact\footnote{We write CH, GCH, DC, AD and $ \mathfrak{p} $ for the continuum hypothesis, generalised continuum hypothesis, axiom of dependent choice, axiom of determinacy and the pseudo-intersection number respectively.}. 
	
	The existence of a monochromatic $K_{\omega,\omega_1}$ when colouring the edges of a $K_{\omega,\omega_1}$ with two colours is even more dependent on the set-theoretic framework.
	While there is a colouring without a monochromatic copy in ZFC+CH, there is no such colouring in ZFC+$\omega_1<\mathfrak{p} $.
	Since we could not find these particular statements formulated anywhere in the literature on infinite Ramsey theory, for the sake of completeness we include them here as Corollaries \ref{cor: no monochrom} and \ref{cor: monochrom}.
	
	These Ramsey-type results compare well to the corresponding results about the existence of a winning strategy for either player.
	Our main results are as follows:
	
	\begin{thm}\label{t: main thm bip}
		It is independent of ZFC if Breaker has a winning strategy in the game $\mathfrak{MB}(K_{\omega, \omega_1}, K_{\omega,\omega_1}) $. He has one under ZFC+GCH\footnote{A closer analysis shows that only CH is needed here, but we have chosen a simpler exposition over optimality of the results, since the independence is our main concern.},  while Maker has one under ZFC$+\omega_1<\mathfrak{p}$.
	\end{thm}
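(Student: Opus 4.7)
The plan is to describe explicit winning strategies for both players and then analyze them. First note that since $K_{\omega,\omega_1}$ is bipartite, any copy of it inside the host graph $K_{\omega,\omega_1}$ must have its $\omega$-side inside $A := \omega\times\{0\}$ and its $\omega_1$-side inside $B := \omega_1 \times \{1\}$; so, writing $M_b := \{a \in A : ab \text{ is Maker's}\}$, Maker wins iff some infinite $S \subseteq A$ satisfies $S \subseteq M_b$ for uncountably many $b \in B$.

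For \textbf{Breaker under CH}, I would use CH to enumerate all infinite subsets of $A$ as $(S_\xi)_{\xi<\omega_1}$ and the vertices of $B$ as $(b_\eta)_{\eta<\omega_1}$, and for each countable $\eta$ fix an enumeration $\{\xi : \xi\leq \eta\} = \{\xi_n^\eta : n<\omega\}$. Breaker's strategy is of ``mirror-with-book-keeping'' type: whenever Maker's latest move was an edge at some $b_\eta$, Breaker plays an unclaimed edge $ab_\eta$ with $a \in S_{\xi_n^\eta}$, where $n$ is the number of Breaker's previous moves at $b_\eta$ (with any remaining unclaimed edges handled as cleanup in some arbitrary order). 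This move always exists because when Breaker is due to make his $n$-th move at $b_\eta$, only finitely many edges at $b_\eta$ have been claimed so far, leaving infinitely many unclaimed edges in $S_{\xi_n^\eta}\times\{b_\eta\}$. Given any $\xi$ and any $\eta \geq \xi$, either Maker played finitely often at $b_\eta$ (so $M_{b_\eta}$ is finite and cannot contain the infinite $S_\xi$), or Maker played $\omega$-many times at $b_\eta$ and Breaker mirrored $\omega$-many times, hitting each index $\leq \eta$ including $\xi$. In either case $S_\xi \not\subseteq M_{b_\eta}$, so $\{b : S_\xi \subseteq M_b\} \subseteq \{b_\eta : \eta < \xi\}$ is countable and Breaker wins.

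For \textbf{Maker under $\omega_1 < \mathfrak{p}$}, I would partition the game into $\omega_1$ phases of length $\omega$ and have Maker recursively build a $\subseteq^*$-decreasing tower $(S_\alpha)_{\alpha<\omega_1}$ of infinite subsets of $A$ alongside distinct vertices $(b_\alpha)_{\alpha<\omega_1} \subseteq B$. At the start of phase $\alpha$, Maker chooses $b_\alpha$ to be any vertex incident to no previously claimed edge (available since only countably many vertices have been touched so far), and during phase $\alpha$ Maker plays each of his $\omega$ moves as an unclaimed edge $ab_\alpha$ with $a \in S_\alpha$; this is possible because at each such turn only finitely many edges at $b_\alpha$ are claimed. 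Then set $S_{\alpha+1} := M_{b_\alpha} \cap S_\alpha$, an infinite subset of $S_\alpha$ because it contains Maker's $\omega$ in-phase picks. At countable limits take any infinite pseudo-intersection, which exists in ZFC.

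The hard part will be the final step. Apply $\omega_1 < \mathfrak{p}$ to $\{S_\alpha : \alpha < \omega_1\}$, which has the strong finite intersection property as it is $\subseteq^*$-decreasing, to get an infinite $S \subseteq A$ with $S \subseteq^* S_\alpha$ for every $\alpha$; in particular $S \setminus M_{b_\alpha} \subseteq S \setminus S_{\alpha+1}$ is a finite subset of the countable set $S$. Since $[S]^{<\omega}$ is countable, by pigeonhole there is a single finite $F \subseteq S$ with $S \setminus M_{b_\alpha} = F$ for $\alpha$ in an uncountable set $I$. Then $S \setminus F$ is infinite, contained in $M_{b_\alpha}$ for every $\alpha \in I$, and exhibits the desired copy of $K_{\omega,\omega_1}$ inside Maker's graph. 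The crux is exactly this upgrade from the $\subseteq^*$ delivered by $\mathfrak{p}$ to a genuine $\subseteq$ holding for uncountably many $b_\alpha$ simultaneously; pigeonhole on the countably many finite subsets of $S$ is the decisive maneuver.
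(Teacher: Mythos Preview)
Your proof is correct and follows essentially the same approach as the paper: the Breaker strategy is the direct bipartite specialization (under CH) of the paper's Proposition~2.1 mirror-with-bookkeeping argument, and your Maker strategy---build a $\subseteq^*$-decreasing tower of neighbourhoods, take a pseudo-intersection at the end via $\omega_1<\mathfrak{p}$, then pigeonhole on the finite differences---is exactly the paper's Proposition~3.1, with your tower formulation a cosmetic repackaging of the paper's ``strong finite intersection property'' invariant. The only organizational difference is that the paper first proves Breaker wins on the larger board $K_{\omega_1}$ and then restricts, whereas you work on $K_{\omega,\omega_1}$ from the outset; this is precisely the ``closer analysis'' the paper's footnote alludes to.
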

	
	\begin{thm}\label{t: Ramsey bipart}
		It is independent of ZFC if every  $ 2 $-colouring of the edges of $ K_{\omega, \omega_1} $ admits a monochromatic copy of $ K_{\omega, \omega_1} $. It is true in ZFC$+\omega_1<\mathfrak{p} $ but fails under ZFC+CH.
	\end{thm}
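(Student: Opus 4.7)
The plan is to prove the two halves of the independence statement by independent arguments. For the CH direction (showing a colouring with no monochromatic copy exists under CH), enumerate $[\omega]^{\omega} = \{A_\xi : \xi < \omega_1\}$ and construct $c$ by recursion on the $\omega_1$-coordinate. At stage $\alpha < \omega_1$, walking through an enumeration of order type at most $\omega$ of $\{\xi : \xi < \alpha\}$, greedily pick for each $\xi$ two fresh distinct elements $m_\xi^0, m_\xi^1 \in A_\xi$ avoiding the finitely many integers already committed at this stage, and set $c(m_\xi^0, \alpha) := 0$ and $c(m_\xi^1, \alpha) := 1$; remaining values of $c(\cdot, \alpha)$ are arbitrary. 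By construction $c(\cdot, \alpha)\upharpoonright A_\xi$ is non-constant whenever $\xi < \alpha$, so for any candidate $A = A_\xi$ only countably many $\alpha$'s (namely $\alpha \leq \xi$) can make $c(\cdot, \alpha)\upharpoonright A$ constant, ruling out a monochromatic $K_{\omega, \omega_1}$.

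For the $\omega_1 < \mathfrak{p}$ direction, given a colouring $c$ write $X_\alpha^i := \{n \in \omega : c(n,\alpha) = i\}$. By symmetry and pigeonhole, fix an uncountable $C_0 \subseteq \omega_1$ on which $X_\alpha^0$ is infinite. Attempt to build by recursion of length $\omega_1$ an injective sequence $(\alpha_\xi)_{\xi<\omega_1}$ in $C_0$ together with an $\subseteq^*$-descending tower $(Y_\eta)_{\eta<\omega_1}$ of infinite subsets of $\omega$ satisfying the invariant $Y_\eta \subseteq^* X_{\alpha_\xi}^0$ for all $\xi < \eta$. At a successor stage $\eta + 1$, look for $\alpha_\eta \in C_0 \setminus \{\alpha_\xi : \xi < \eta\}$ with $Y_\eta \cap X_{\alpha_\eta}^0$ infinite and set $Y_{\eta+1} := Y_\eta \cap X_{\alpha_\eta}^0$. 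At a countable limit $\mu$, choose $Y_\mu$ to be any pseudo-intersection of the countable tower $\{Y_\xi : \xi < \mu\}$, which exists by diagonalisation.

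The dichotomy between success and failure of this recursion then yields the monochromatic copy. If the recursion reaches stage $\omega_1$, the tower $(Y_\eta)_{\eta<\omega_1}$ has size $\aleph_1 < \mathfrak{p}$ and thus admits an infinite pseudo-intersection $Y^*$ satisfying $Y^* \subseteq^* X_{\alpha_\xi}^0$ for every $\xi$; since $|[Y^*]^{<\omega}| = \aleph_0$, a pigeonhole argument produces an uncountable $S \subseteq \omega_1$ and a fixed finite $F \subseteq Y^*$ with $Y^* \setminus F \subseteq X_{\alpha_\xi}^0$ for all $\xi \in S$, giving a monochromatic copy of colour $0$. If instead the recursion halts at some $\eta < \omega_1$, then for every $\alpha$ in the uncountable set $C_0 \setminus \{\alpha_\xi : \xi < \eta\}$ the set $Y_\eta \cap X_\alpha^0$ is finite, equivalently $Y_\eta \setminus X_\alpha^1$ is a finite subset of $Y_\eta$; the same pigeonhole produces an uncountable $B \subseteq C_0$ and a finite $F \subseteq Y_\eta$ with $Y_\eta \setminus F \subseteq X_\alpha^1$ for all $\alpha \in B$, witnessing a monochromatic copy of colour $1$. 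The main subtlety is the dichotomy itself: recognising that \emph{failure} of the recursion to extend directly certifies a monochromatic copy in the opposite colour. Once this is seen, $\omega_1 < \mathfrak{p}$ enters only at the very last stage to pass from an $\aleph_1$-sized tower to its pseudo-intersection, and the rest reduces to two routine pigeonhole counts.
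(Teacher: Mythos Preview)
Your proof is correct. The CH direction is essentially identical to the paper's argument (Corollary~\ref{cor: no monochrom} specialised to $\kappa=\omega$): both enumerate the candidate small sides $A\in[\omega]^{\omega}$ under CH and, at stage~$\alpha$, colour the edges at $v_\alpha$ so that both colours meet every $A_\xi$ with $\xi<\alpha$; your greedy choice of two fresh witnesses just makes explicit what the paper leaves as ``in such a way that both colours appear''.

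For the $\omega_1<\mathfrak{p}$ direction your approach genuinely differs from the paper's. The paper fixes a free ultrafilter $\mathcal{U}$ on the countable side, notes that for each $v$ one of $N_r(v),N_b(v)$ lies in $\mathcal{U}$, passes to an uncountable $V'$ where the same colour wins, and then applies $\omega_1<\mathfrak{p}$ once to the family $\{N_r(v):v\in V'\}$ (strong finite intersection comes for free from the ultrafilter); the final pigeonhole on the finite differences is the same as yours. You instead build a $\subseteq^*$-descending tower by recursion and argue by dichotomy: if the recursion runs to length $\omega_1$ you invoke $\omega_1<\mathfrak{p}$ for the tower, while a stall at some $\eta<\omega_1$ directly yields a monochromatic copy in the \emph{other} colour without using $\mathfrak{p}$ at all. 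Your route is more elementary in that it avoids the ultrafilter, at the price of the extra case split and bookkeeping; the paper's ultrafilter trick buys the strong finite intersection property in one line. (A minor remark: the preliminary restriction to $C_0$ is harmless but not actually needed, since in the stall case $Y_\eta\cap X_\alpha^0$ finite already gives $Y_\eta\subseteq^* X_\alpha^1$ regardless of whether $X_\alpha^0$ was infinite.)
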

	
	\begin{thm}\label{t: main thm}
		Assuming the consistency of AD, it is independent of ZF+DC  if Breaker has winning strategies in the games $\mathfrak{MB}(K_{\omega_n}, K_{\omega_n}) $ for $ n\in \{ 1,2 \} $. He has such winning strategies under ZFC+GCH,  while Maker has winning strategies in these games under ZF+DC+AD.
	\end{thm}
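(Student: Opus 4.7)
I would treat the two halves separately: Breaker's winning strategies under ZFC+GCH via a bookkeeping generalising the construction from Theorem \ref{t: main thm bip}, and Maker's winning strategies under ZF+DC+AD using the measurability of $\omega_n$ for $n\in\{1,2\}$.

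For Breaker under ZFC+GCH, the essential fact is that under GCH the set of pairs $(A,v)$ with $A\subseteq\omega_n$, $|A|<\omega_n$ and $v\in\omega_n\setminus A$ has cardinality $\omega_n$ for $n\in\{1,2\}$. Breaker fixes such an enumeration $\langle(A_\xi,v_\xi):\xi<\omega_n\rangle$ and processes at each turn $\xi$ a pair whose distinguished vertex $v_\xi$ is ``fresh'' enough, claiming $\{v_\xi,a_\xi\}$ for some $a_\xi\in A_\xi$. Since at turn $\xi<\omega_n$ only $<\omega_n$ vertices have been touched, such a pair always exists, and a standard bookkeeping argument shows every pair in the enumeration is eventually processed. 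If Maker produced an $\omega_n$-clique $C$ at the end of the game, then choosing any $v\in C$ and any $A\subseteq C\setminus\{v\}$ of appropriate size ($\omega$ for $n=1$, $\omega_1$ for $n=2$), the pair $(A,v)$ is some $(A_\xi,v_\xi)$, and the corresponding Breaker-edge $\{v_\xi,a_\xi\}$ lies inside $[C]^2$, contradicting $C$ being a Maker-clique. This directly generalises the construction for $K_{\omega,\omega_1}$ in Theorem \ref{t: main thm bip}.

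For Maker under ZF+DC+AD, both $\omega_1$ and $\omega_2$ are measurable, so I fix a normal ultrafilter $U_n$ on $\omega_n$. Maker plans an $\omega_n$-clique $\{v_\alpha:\alpha<\omega_n\}$ in his own subgraph, using a fixed bookkeeping on $\omega_n$-many index pairs that interleaves (i) designations of new clique-vertices and (ii) claims of pending pairwise edges. The key invariant is a $U_n$-measure-one ``reservoir'' $X_\xi\subseteq\omega_n$ of admissible vertices --- those $v$ such that every edge $\{v,v_\alpha\}$ to a previously designated $v_\alpha$ is still unclaimed by Breaker. Since $U_n$ is $\omega_n$-complete and Breaker has played fewer than $\omega_n$ moves by turn $\xi<\omega_n$, the reservoir remains in $U_n$ after excising the $<\omega_n$-many newly bad vertices; each new clique-vertex is then drawn from the current reservoir, which guarantees that its edges to the previously designated vertices are unclaimed at the moment of designation.

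The principal technical obstacle is the synchronisation in Maker's bookkeeping: once $v_\tau$ is designated, Maker still needs $|\tau|$ further turns to claim each edge $\{v_\tau,v_\alpha\}$ for $\alpha<\tau$, and during those turns Breaker may race and try to claim some of these edges first. The resolution is a bookkeeping order that prioritises recently designated vertices, combined with delaying the commitment to a specific $v_\tau\in X_\tau$: the $U_n$-largeness of the reservoir lets Maker pick a ``generic'' $v_\tau$ whose incident edges are robust against an additional bounded number of Breaker moves. The cases $n=1$ and $n=2$ then run along structurally identical lines, differing only in the cardinal arithmetic ($\omega_n$-completeness of $U_n$ and $\omega_n\cdot\omega_n=\omega_n$) and in the choice of measurable cardinal.
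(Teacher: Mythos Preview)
Both halves of your sketch have genuine gaps.

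For Breaker, the bookkeeping you describe does not guarantee that when the pair $(A_\xi,v_\xi)$ is processed there is still an unclaimed edge from $v_\xi$ into $A_\xi$: since $|A_\xi|=\omega_{n-1}$ while by stage $\xi$ there may already be $|\xi|\geq\omega_{n-1}$ claimed edges at $v_\xi$, all of them may be gone. Demanding that $v_\xi$ be fresh restores availability but then destroys the claim that every pair is eventually processed, because Maker can touch any prescribed vertex on her very first move and thereafter that $v$ is never fresh again. The paper's strategy is \emph{reactive} rather than driven by a fixed schedule: whenever Maker plays her $(\gamma+1)$st downward edge from a vertex $\alpha$, Breaker answers with a downward edge from the \emph{same} $\alpha$ into $f_\alpha(\gamma)$, where $f_\alpha\colon\kappa\to\{A_\beta:\beta\le\alpha\}$ is a fixed surjection. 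Tying Breaker's moves at $\alpha$ to Maker's moves at $\alpha$ keeps the total number of claimed downward edges at $\alpha$ below $\kappa=|A_\beta|$, so an edge into the target set is always available; this coupling is the missing idea.

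For Maker, the race condition you identify is fatal to a direct clique-building approach, and your proposed fix does not work: once Maker reveals $v_\tau$ by playing any edge from it, Breaker can immediately claim some $\{v_\tau,v_\alpha\}$ with $\alpha<\tau$ that Maker has not yet reached, and no prior genericity in the choice of $v_\tau$ from the reservoir helps after it has been fixed. The paper sidesteps the race by not committing to a single clique during play. Maker instead grows a sub-binary Hausdorff tree $T\subseteq\kappa$ in which any two comparable vertices are joined by a Maker-edge: each fresh vertex $v$ is pushed down a branch of the current tree, switching to the sibling subtree whenever Breaker blocks one side, and is inserted where no further step is possible. Only at the end does a $\kappa$-complete ultrafilter on $T_\kappa$ select a $\kappa$-branch, which is then the desired Maker-copy of $K_\kappa$. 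This tree structure is exactly what makes ``delaying the commitment'' work, and it is not recoverable from the reservoir picture you sketch.
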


	Let $\mathfrak{MB}(K_{\kappa}, K_{\mathsf{club}}) $ be the game in which Maker's goal is to build a ``$K_{\mathsf{club}}$'', i.e. a  complete graph whose vertex set is a closed unbounded subset of $ \kappa $.
	
	\begin{thm}\label{t: main thm club}
		Assuming the consistency of AD, it is independent of ZF+DC  if Breaker has a winning strategy in the game $\mathfrak{MB}(K_{\omega_1}, K_{\mathsf{club}}) $.
	\end{thm}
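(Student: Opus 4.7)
The plan is to treat the two halves of the independence in turn.

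Under ZFC+CH I would simply reuse the Breaker strategy from Theorem~\ref{t: main thm}. That theorem provides a winning strategy $\sigma$ for Breaker in $\mathfrak{MB}(K_{\omega_1},K_{\omega_1})$ under CH; since any $K_{\mathsf{club}}$-subgraph of $K_{\omega_1}$ is in particular a $K_{\omega_1}$-subgraph, the same $\sigma$ a fortiori prevents Maker from building a $K_{\mathsf{club}}$, and so is winning in $\mathfrak{MB}(K_{\omega_1},K_{\mathsf{club}})$ as well.

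Under ZF+DC+AD the key input is the result of Solovay (see \cite{kanamori2008higher}*{Theorem 28.2}) that $\omega_1$ is measurable with the club filter as its unique normal ultrafilter. From this one extracts by a standard diagonal-intersection argument the following strong partition property: for every $2$-colouring $c\colon [\omega_1]^2 \to 2$ there is a club $C \subseteq \omega_1$ such that $c$ is constant on $[C]^2$, and the constant value is uniquely determined (were there two clubs $C_0,C_1$ monochromatic in different colours, any edge spanned by two ordinals of $C_0 \cap C_1$ would carry both colours at once). Consequently, at the end of every play of $\mathfrak{MB}(K_{\omega_1},K_{\mathsf{club}})$ exactly one of Maker's and Breaker's final edge sets contains a $K_{\mathsf{club}}$: Maker wins precisely when Breaker's graph does not.

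I would exploit this dichotomy to rule out a winning strategy for Breaker by strategy stealing. Assume for a contradiction that $\sigma$ is such a winning strategy. Define a Maker strategy $\tau$ as follows: on turn $0$ Maker claims an arbitrary edge $e_0$; on each subsequent turn he treats the game to date as a virtual play with the roles of the two players swapped, letting real Breaker's moves serve as virtual Maker's moves, and responds as virtual Breaker according to $\sigma$ (substituting an arbitrary free edge whenever $\sigma$ proposes an already-claimed one, which is harmless since Maker's winning condition is monotone in his edge set). In the virtual game $\sigma$ is winning for virtual Breaker, so virtual Maker --- namely real Breaker --- fails to build a $K_{\mathsf{club}}$; by the dichotomy above, real Maker's graph must therefore contain one. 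Hence $\tau$ is a winning strategy for Maker, contradicting the existence of $\sigma$.

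The principal obstacle is correctly harnessing the AD fact about $\omega_1$; once the uniqueness of the monochromatic colour on a club is in hand, the rest is standard strategy stealing, with the monotonicity of Maker's goal making the book-keeping around substitutions unproblematic.
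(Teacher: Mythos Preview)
Your approach is essentially the paper's own: under CH use the $K_{\omega_1}$-Breaker strategy (which a fortiori blocks $K_{\mathsf{club}}$), and under ZF+DC+AD combine the club-Ramsey consequence of the club filter being a normal ultrafilter on $\omega_1$ with strategy stealing. The uniqueness-of-colour observation is a nice touch, though the paper only needs the ``at least one'' direction.

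There is, however, one genuine gap in your strategy-stealing description. The board $K_{\omega_1}$ has $\omega_1$ edges, so the game has $\omega_1$ turns and in particular has \emph{limit} turns. Your recipe ``on turn~$0$ play arbitrarily, on each subsequent turn respond as virtual Breaker via~$\sigma$'' is not well-defined at a limit turn~$\lambda$: at the start of real turn~$\lambda$ the virtual game has completed exactly its turns~$\beta<\lambda$, so it is virtual \emph{Maker}'s move at virtual turn~$\lambda$---but virtual Maker is real Breaker, whose move $b_\lambda$ has not yet been made. Hence $\sigma$ has nothing to respond to, and your Maker has no instruction. The paper's fix is simply to have Maker also play an arbitrary edge at every limit turn (not just at turn~$0$); then at each successor turn~$\alpha+1$ she feeds $b_\alpha$ to the virtual game as virtual Maker's move at virtual turn~$\alpha$ and plays $\sigma$'s reply (substituting as you describe if necessary). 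With that amendment your argument goes through: real Breaker's edge set equals virtual Maker's edge set, which by $\sigma$ contains no $K_{\mathsf{club}}$, so by the club-Ramsey dichotomy real Maker's does.
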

	
	Our results raise the following natural questions: 
	
	\begin{quest}
		Is it consistent with ZFC that neither Maker nor Breaker has a winning strategy in the game $\mathfrak{MB}(K_{\omega, \omega_1}, K_{\omega,\omega_1}) $?
	\end{quest}
	
	\begin{quest}
		Does Breaker have a winning strategy in $\mathfrak{MB}(K_{\omega_1}, K_{\omega_1})$ under ZFC?
	\end{quest}
	
	\begin{quest}
		Does Maker have a winning strategy in $\mathfrak{MB}(K_{\omega_1}, K_{\mathsf{club}}) $ under  ZF+DC+AD?
	\end{quest}
	
	\textbf{Acknowledgements:} The authors are grateful to Stefan Geschke, Zoltán Vidnyánszky and Daniel Hathaway for the insightful discussions about the Axiom of determinacy.

	\section{The winning strategies of Breaker under GCH}
	\begin{prop}[ZFC+GCH]\label{p: Breaker win}
		For every infinite cardinal $ \kappa $, Breaker has a winning strategy in the game $\mathfrak{MB}(K_{\kappa^{+}}, K_{\kappa, \kappa^{+}}) $.
	\end{prop}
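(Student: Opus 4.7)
The plan is to reformulate Breaker's winning condition combinatorially and then construct a bookkeeping strategy enabled by the cardinal arithmetic available under GCH.

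First I would rephrase the winning condition. Maker wins iff his final edge set $E_M$ contains a copy of $K_{\kappa,\kappa^+}$, equivalently iff there is some $A\in [\kappa^+]^\kappa$ for which the set $N^*(A):=\{b\in \kappa^+\setminus A : A\subseteq N_M(b)\}$ has size at least $\kappa^+$. It therefore suffices for Breaker to guarantee that $|N^*(A)|\le \kappa$ for every $A\in [\kappa^+]^\kappa$.

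Under GCH we have $(\kappa^+)^\kappa=\kappa^+$, since $\kappa^+$ is regular of cofinality above $\kappa$, so $|[\kappa^+]^\kappa|=\kappa^+$. I would fix an enumeration $\{A_\xi:\xi<\kappa^+\}$ of $[\kappa^+]^\kappa$, enumerations $\kappa^+\setminus A_\xi=\{b^\xi_\gamma:\gamma<\kappa^+\}$ for each $\xi$, and a bijection $\pi:\kappa^+\to\kappa^+\times\kappa^+$, $\pi(\eta)=(\xi(\eta),\gamma(\eta))$. On his $\eta$-th move, Breaker addresses the task $(\xi(\eta),\gamma(\eta))$ by claiming any unclaimed edge $\{a,b^{\xi(\eta)}_{\gamma(\eta)}\}$ with $a\in A_{\xi(\eta)}$; if all $\kappa$ such candidate edges have already been claimed, he plays an arbitrary unclaimed edge. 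Any successfully completed task at $(\xi,\gamma)$ contributes a Breaker-edge from $A_\xi$ to $b^\xi_\gamma$, whence $b^\xi_\gamma\notin N^*(A_\xi)$. Setting $F_\xi=\{\gamma<\kappa^+:\text{task }(\xi,\gamma)\text{ fails}\}$, we get $N^*(A_\xi)\subseteq \{b^\xi_\gamma:\gamma\in F_\xi\}$, and so it remains to prove the bound $|F_\xi|\le \kappa$ for every $\xi$.

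Showing $|F_\xi|\le \kappa$ is the delicate part and I expect it to be the main obstacle. Each $\gamma\in F_\xi$ forces Maker to have previously claimed all $\kappa$ edges of the set $A_\xi\times\{b^\xi_\gamma\}$, and these $\kappa$-sets are pairwise disjoint as $\gamma$ varies. Hence $|F_\xi|\ge \kappa^+$ would require Maker to have spent $\kappa\cdot \kappa^+=\kappa^+$ dedicated moves inside $A_\xi\times (\kappa^+\setminus A_\xi)$, essentially his entire budget. To exploit this rigidity, I expect to refine the naive schedule above into an adaptive strategy: whenever Maker's play signals concentration on a particular $A_\xi$, Breaker diverts additional moves to the block $\pi^{-1}(\{\xi\}\times\kappa^+)$, so that the interaction between Breaker's responsiveness and Maker's $\kappa^+$-sized budget rules out the possibility of $\kappa^+$ preemptions concentrated at a single $\xi$ and yields $|F_\xi|\le \kappa$ for every $\xi$, completing the proof.
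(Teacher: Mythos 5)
Your reduction of the winning condition is correct: it does suffice to ensure $|N^*(A)|\le\kappa$ for every $A\in[\kappa^+]^\kappa$, and your bookkeeping setup correctly reduces this to the bound $|F_\xi|\le\kappa$. But that bound is the entire content of the proposition, and you do not prove it; you explicitly defer it to an unspecified ``adaptive refinement.'' Worse, the static schedule you do write down genuinely fails, not merely for lack of a counting argument. Maker can defeat it as follows: she fixes one target set $A=A_{\xi_0}$ and plays in $\kappa^+$ phases of length $\kappa$. At the start of phase $\iota$ fewer than $\kappa^+$ moves have elapsed, so fewer than $\kappa^+$ of the tasks $(\xi_0,\gamma)$ have been scheduled yet and fewer than $\kappa^+$ vertices have been touched; she therefore picks a fresh vertex $b$ whose task $(\xi_0,\gamma)$ lies beyond the end of the current phase and which is not addressed by any task during the phase (only $\kappa$ tasks occur per phase), and claims all $\kappa$ edges of $A\times\{b\}$ before Breaker's schedule ever looks at it. Every such task fails, so $|F_{\xi_0}|=\kappa^+$. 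The budget computation $\kappa\cdot\kappa^+=\kappa^+$ that you note is exactly the reason there is no contradiction: Maker's budget suffices. Your proposed fix --- divert moves to the block of $\xi$ ``whenever Maker's play signals concentration on a particular $A_\xi$'' --- does not survive scrutiny either, because such concentration is undetectable: a partial Maker-neighbourhood of size $<\kappa$, or even of size $\kappa$, is contained in $2^\kappa=\kappa^+$ many of the sets $A_\xi$, so Breaker cannot know which block to reinforce until it is too late.

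The missing idea is to make the bookkeeping \emph{local to the vertex Maker just played at}, rather than global. The paper's strategy assigns to each vertex $\alpha$ its own agenda: a surjection $f_\alpha:\kappa\to\{A_\beta:\beta\le\alpha\}$, which is possible because only $|\alpha|\le\kappa$ many of the candidate small sides need to be handled at $\alpha$. Whenever Maker claims her $(\gamma+1)$st downward edge at a vertex $\alpha$, Breaker immediately answers with an edge from $\alpha$ down into $f_\alpha(\gamma)$. The point is that if $\beta$ is to lie in the large side of a Maker copy with small side $A=A_{\alpha_0}$, then for all $\beta$ above $\max\{\alpha_0,\sup A\}$ Maker is forced to claim $\kappa$ downward edges at $\beta$, so Breaker gets $\kappa$ local responses at $\beta$, one of which targets $A$ itself at a stage when Maker has claimed only $|\gamma|+1<\kappa$ of the $\kappa$ edges from $\beta$ to $A$ --- so Breaker can still steal one. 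This per-vertex pacing is what replaces your unprovable global bound $|F_\xi|\le\kappa$, and it is the step your proposal is missing.
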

	\begin{proof}
		Let us assume that $ K_{\kappa^{+}} $ is represented as the complete graph on the vertex set $\kappa^+$. Working under GCH, we fix an  enumeration $ \{ A_\alpha:
		\alpha<\kappa^{+}   \} $ of $ [\kappa^{+}]^{\kappa} $  and for each  $\alpha<\kappa^{+}$, we pick a surjective function $ f_\alpha: \kappa \rightarrow \{ A_\beta: \beta \leq\alpha \}$).
		Whenever Maker plays an edge $ \{ \beta, \alpha \} $ with $ \beta < \alpha $  and there is a $ \gamma<\kappa $ such that this is the $(\gamma+1) $st downwards edge from $ \alpha $ she claims, Breaker chooses  the smallest $\delta\in f_\alpha(\gamma) $ for which $ \{ \delta, \alpha \} $ is available, and plays $ \{ \delta, \alpha \} $ if such a $ \delta $ exists - otherwise he plays arbitrarily. 
		
		Suppose for a contradiction that Maker manages to build a $ K_{\kappa, \kappa^{+}} $ (despite Breaker playing as above) and let $ A $ be its smaller and $ B $ its bigger vertex class.
		Then there is an  $ \alpha<\kappa^{+} $ with $A_{\alpha}=A$. 
		Fix a $ \beta\in  B $ with $ \beta > \max \{\alpha, \sup A\} $ and let $ \gamma < \kappa $ with $ f_\beta(\gamma)=A $.
		At the turn when Maker claims a downwards edge from $ \beta $ for the $ (\gamma+1) $st time, there are still $ \kappa $ many  $ \delta\in A $ for which $ \{ \delta, \beta \} $ is available, thus Breaker's next play is $ \{ \delta, \beta \} $ for the smallest such $ \delta $.
		This contradicts  $ \{ \delta, \beta \}\in E(G_M) $.
	\end{proof}
	
	The corresponding negative Ramsey-result can be proved in a similar manner:
	\begin{cor}[ZFC+GCH]\label{cor: no monochrom}
		For every infinite cardinal $ \kappa $, there exists a $ 2 $-colouring of the edge set of $ K_{\kappa, \kappa^{+}} $ without a monochromatic copy of $ K_{\kappa, \kappa^{+}} $.
	\end{cor}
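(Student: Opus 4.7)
The plan is to adapt the proof of Proposition~\ref{p: Breaker win}, replacing Breaker's adaptive strategy by a pre-defined $2$-colouring. Represent $K_{\kappa,\kappa^+}$ with vertex classes $\kappa\times\{0\}$ and $\kappa^+\times\{1\}$. Under GCH we have $|[\kappa]^{\kappa}|=2^{\kappa}=\kappa^+$, so we may fix an enumeration $\{A_\alpha:\alpha<\kappa^+\}$ of $[\kappa]^{\kappa}$ and, for each $\alpha<\kappa^+$, a surjection $f_\alpha:\kappa\to\{A_\beta:\beta\leq\alpha\}$, exactly as in the proposition.

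For every $\alpha<\kappa^+$ I construct a ``local'' colouring $c_\alpha:\kappa\to\{0,1\}$ by transfinite recursion on $\gamma<\kappa$: at stage $\gamma$, having committed at most $2|\gamma|<\kappa$ values so far, I select two fresh points $x_\gamma,y_\gamma$ inside the $\kappa$-sized set $f_\alpha(\gamma)$ and decree $c_\alpha(x_\gamma)=0$, $c_\alpha(y_\gamma)=1$; afterwards I extend $c_\alpha$ arbitrarily on the remaining (possibly nonempty) part of $\kappa$. By construction, $c_\alpha\restriction f_\alpha(\gamma)$ is non-constant for every $\gamma<\kappa$. Then define the global colouring on $E(K_{\kappa,\kappa^+})$ by $c(\{(\delta,0),(\alpha,1)\}):=c_\alpha(\delta)$.

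For the verification, suppose towards a contradiction that some monochromatic copy of $K_{\kappa,\kappa^+}$ exists. Since the small side of the host has only $\kappa$ vertices, the $\kappa^+$-sized vertex class $B$ of the copy must lie in $\kappa^+\times\{1\}$, and the $\kappa$-sized class $A$ in $\kappa\times\{0\}$. Fix $\alpha_0<\kappa^+$ with $A=A_{\alpha_0}$ and pick any $\alpha\in B$ with $\alpha\geq\alpha_0$ (possible, since $|B|=\kappa^+>|\alpha_0|$). Then $A=f_\alpha(\gamma)$ for some $\gamma<\kappa$, and by construction $c_\alpha\restriction A$ attains both values at the pair $x_\gamma,y_\gamma\in A$, contradicting the assumed monochromaticity of all edges from $(\alpha,1)$ to $A$.

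The only technical subtlety is the feasibility of the recursion defining $c_\alpha$, which reduces to the elementary fact that $2|\gamma|<\kappa$ whenever $\gamma<\kappa$ and $\kappa$ is infinite. Compared with the game version, the bipartite setting is actually slightly simpler: there is no need for the analogue of ``$\beta>\sup A$'' that appears in the proof of Proposition~\ref{p: Breaker win}, because in a bipartite host there is no orientation issue preventing the distinguished pair from forming edges.
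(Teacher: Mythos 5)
Your construction is correct and is essentially the paper's own argument: for each vertex $(\alpha,1)$ of the larger class you colour its incident edges so that both colours appear on the edges towards every candidate small class $A_\beta$ with $\beta\le\alpha$, which is exactly what the paper does, except that you make explicit the transfinite recursion producing $c_\alpha$ (which the paper leaves implicit) and enumerate $[\kappa]^{\kappa}$ rather than $[\kappa^+]^{\kappa}$. There is no gap; the verification via a vertex $(\alpha,1)\in B$ with $\alpha\ge\alpha_0$ matches the paper's reasoning.
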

	
	\begin{proof}
		Let $ \{ v_\alpha: \alpha<\kappa^{+} \} $ be an enumeration of the larger vertex class and let $ \{ A_\alpha: 
		\alpha<\kappa^{+}   \} $ be an enumeration of $ [\kappa^{+}]^{\kappa} $.
		For each $ \alpha<\kappa^{+} $, we colour the edges incident with $ v_\alpha $ in such a way that for every $ \beta \leq \alpha $ both colours appear among the edges between $ v_\alpha $ and $ A_\beta $.
		This clearly ensures that no  set $ A  $ can be the smaller vertex class of a monochromatic copy of $ K_{\kappa, \kappa^{+}} $ and therefore no such a monochromatic copy exists.
	\end{proof}
	
	\begin{obs}\label{o: monoton}
		If Breaker has a winning strategy in $ \mathfrak{MB}(G,H) $, then he also has one in every game $ \mathfrak{MB}(G',H') $ where $ G' $ is a subgraph of $ G $ and $ H' $ is a supergraph of $ H $.  
	\end{obs}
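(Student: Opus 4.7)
The plan is to prove this observation by a standard simulation argument. Let $\sigma$ be a winning strategy for Breaker in $\mathfrak{MB}(G,H)$; I would describe a strategy $\sigma'$ for Breaker in $\mathfrak{MB}(G',H')$ in which, alongside the real game on $G'$, Breaker maintains in his head a \emph{shadow game} on $G$ and follows $\sigma$ there.

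At each turn, when Maker claims an edge $e$ in the real game, Breaker declares that Maker also claimed $e$ in the shadow game; this move is legal in the shadow since $E(G')\subseteq E(G)$. Breaker then consults $\sigma$ to obtain some $f\in E(G)$. If $f\in E(G')$ and $f$ is still unclaimed in the real game, Breaker plays $f$ in both games; otherwise Breaker plays $f$ only in the shadow game and chooses any still-unclaimed edge of $G'$ in the real game.

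The first step is to verify that both games remain legal throughout. The only non-trivial point is that Maker's real moves are always legal shadow moves: any edge previously played in the shadow but not in the real game is either outside $E(G')$ (so not playable in the real game at all) or was already claimed in the real game by Breaker's ``otherwise''-clause (so unavailable to Maker). When the real game terminates, all of $E(G')$ has been claimed; to finish, I would mentally extend the shadow game by letting Maker claim the remaining edges of $E(G)\setminus E(G')$ in some order, with Breaker continuing to apply $\sigma$. Since $\sigma$ is winning, Maker's final shadow graph contains no copy of $H$. The real Maker-graph $G_M$ is a subgraph of this shadow graph, so it also contains no copy of $H$; since $H\subseteq H'$, $G_M$ contains no copy of $H'$ either.

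The only real subtlety is the bookkeeping verification that the shadow game stays consistent whenever Breaker must diverge from $\sigma$'s suggestion in the real game, and as indicated above this falls out of a short case analysis.
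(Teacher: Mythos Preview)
Your simulation argument is correct and is the standard way to justify such monotonicity statements. The paper itself gives no proof of this observation at all; it is stated as a bare remark and immediately applied. So there is nothing to compare against: you have supplied the routine verification that the authors left implicit.

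One tiny imprecision worth noting: when you finish off the shadow game after the real game terminates, the set of edges still unclaimed in the shadow need not be exactly $E(G)\setminus E(G')$. Some edges of $E(G')$ that Breaker picked arbitrarily in the real game (via your ``otherwise'' clause) may never have appeared in the shadow, while some edges of $E(G)\setminus E(G')$ may already have been played there by $\sigma$. The fix is simply to say that Maker claims the remaining \emph{unclaimed} edges of $E(G)$ in some order; your conclusion that the real Maker-graph is contained in the final shadow Maker-graph is unaffected.
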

	
	Since  $ K_{\kappa, \kappa^{+}} $ is a subgraph of $ K_{\kappa^{+}} $, Observation \ref{o: monoton} guarantees that Proposition \ref{p: Breaker win} has the following consequences: 
	\begin{cor}[ZFC+GCH]\label{cor: Breaker win}
		For every infinite cardinal $ \kappa $, Breaker has a winning strategy in the following games:
		\begin{enumerate}
			\item\label{i: Breaker win 1} $\mathfrak{MB}(K_{\kappa, \kappa^{+}}, K_{\kappa, \kappa^{+}}) $,
			\item\label{i: Breaker win 2}  $\mathfrak{MB}(K_{\kappa^{+}}, K_{\kappa^{+}}) $
			\item\label{i: Breaker win 3} $\mathfrak{MB}(K_{\kappa^{+}}, K_{\mathsf{club}}). $
		\end{enumerate}
	\end{cor}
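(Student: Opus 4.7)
The plan is to deduce each of the three items by combining Proposition~\ref{p: Breaker win} with Observation~\ref{o: monoton}. Proposition~\ref{p: Breaker win} gives Breaker a winning strategy in $\mathfrak{MB}(K_{\kappa^{+}}, K_{\kappa, \kappa^{+}})$, and the observation then hands us a winning strategy in $\mathfrak{MB}(G', H')$ for any $G'$ that is a subgraph of $K_{\kappa^{+}}$ and any $H'$ that is a supergraph of $K_{\kappa, \kappa^{+}}$. So the whole proof reduces to choosing the correct $G'$ and $H'$ in each of the three cases.

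For \eqref{i: Breaker win 1}, I take $G' = K_{\kappa, \kappa^{+}}$ (viewed as a subgraph of $K_{\kappa^{+}}$ by partitioning $\kappa^{+}$ into a $\kappa$-sized part and a $\kappa^{+}$-sized part) and $H' = K_{\kappa, \kappa^{+}}$ itself. For \eqref{i: Breaker win 2}, I keep $G' = K_{\kappa^{+}}$ and pass to $H' = K_{\kappa^{+}}$, which obviously contains $K_{\kappa, \kappa^{+}}$. For \eqref{i: Breaker win 3}, I again keep $G' = K_{\kappa^{+}}$ and take $H' = K_{\mathsf{club}}$; since every club in $\kappa^{+}$ is unbounded and thus has cardinality $\kappa^{+}$, the graph $K_{\mathsf{club}}$ is a complete graph on $\kappa^{+}$ vertices and therefore contains $K_{\kappa, \kappa^{+}}$ as a subgraph.

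There is no real obstacle here: all three items are purely formal consequences, with the only minor point to check being that a club subset of $\kappa^{+}$ has size $\kappa^{+}$, which is immediate from unboundedness. The substantive content already sits in Proposition~\ref{p: Breaker win}; the corollary is merely a convenient packaging of its reach via the monotonicity principle.
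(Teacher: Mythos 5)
Your proposal is correct and is exactly the paper's argument: the corollary is stated as an immediate consequence of Proposition~\ref{p: Breaker win} via Observation~\ref{o: monoton}, with the same three choices of $G'$ and $H'$. Your extra remark that a club in $\kappa^{+}$ has cardinality $\kappa^{+}$ (so that a $K_{\mathsf{club}}$ contains a copy of $K_{\kappa,\kappa^{+}}$) is a small point the paper leaves implicit, and you handle it correctly.
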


	\section{Winning strategies for Maker}
	
	During the course of play in $\mathfrak{MB}(G,H)$ we will refer to a vertex as {\em fresh} if no edge incident with that vertex has been claimed yet by either player.
	
	\subsection{A winning strategy for Maker in \texorpdfstring{$\mathfrak{MB}(K_{\omega, \omega_1}, K_{\omega, \omega_1})$}{MBKww1}}
	
	A set $\mathcal{F}$ of sets has the strong finite intersection property if the intersection of any finitely many elements of 
	$\mathcal{F}$ is infinite.
	Given two sets $X$ and $Y$, write $X \subseteq^*Y$ if $X \setminus Y$ is finite.
	A \emph{pseudo-intersection} for a set $\mathcal{F}$ of sets is a set $P$ with $P \subseteq^* F$ for all $F \in \mathcal{F}$.
	The cardinal $\mathfrak{p}$ is the minimum cardinality of a set $\mathcal{F}$ of subsets of $\omega$  that has the strong finite intersection property but does not admit  an infinite pseudo-intersection.
	Clearly $\aleph_0<\mathfrak{p} \leq 2^{\aleph_0}$ and it is known that  $ \omega_1<\mathfrak{p} $ is consistent relative to ZFC (see \cite{kunen2011set}*{Lemma III.3.22 on p. 176}). 
	
	\begin{prop}\label{prop: Maker win MA}
		Maker has a winning strategy in $\mathfrak{MB}(K_{\omega, \omega_1}, K_{\omega, \omega_1}) $ if $ \omega_1<\mathfrak{p} $.
	\end{prop}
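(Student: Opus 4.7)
The plan is to exploit $\omega_1 < \mathfrak{p}$ by producing, during play, an $\omega_1$-sized family $\{N_\alpha : \alpha < \omega_1\}$ of infinite subsets of $\omega$ with the strong finite intersection property, whose pseudo-intersection will pin down a copy of $K_{\omega,\omega_1}$ inside Maker's graph. To do this I would split Maker's play into $\omega_1$ consecutive \emph{phases} of length $\omega$. Writing $L = \omega$ and $R = \omega_1$ for the two vertex classes, note that at the start of phase $\alpha$ only countably many edges have been played so far, so only countably many right vertices have been touched; Maker designates as $r^{*}_\alpha$ the least fresh right vertex, and he will play all $\omega$ of his moves in phase $\alpha$ incident to $r^{*}_\alpha$. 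This produces an infinite set $N_\alpha \subseteq L$ of left vertices joined to $r^{*}_\alpha$ by a Maker-edge.

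To maintain the strong finite intersection property inductively, Maker fixes at the start of phase $\alpha$ an enumeration $(F_k)_{k<\omega}$ of the finite subsets of $\alpha$ in which every such subset appears infinitely often. On his $k$-th move of the phase he picks some $\ell \in \bigcap_{\beta \in F_k} N_\beta$ for which the edge $\{\ell, r^{*}_\alpha\}$ is still available, and plays that edge. Such an $\ell$ exists because the inductive hypothesis makes the intersection infinite, while the freshness of $r^{*}_\alpha$ at the start of the phase ensures that only finitely many of its incident edges have been claimed by either player after $k$ steps. Since each $F_k$ is visited infinitely often, $N_\alpha \cap \bigcap_{\beta \in F} N_\beta$ ends up infinite for every finite $F \subseteq \alpha$, which preserves the invariant.

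Once the game terminates, the family $\mathcal{F} := \{N_\alpha : \alpha < \omega_1\}$ has the strong finite intersection property and cardinality $\omega_1 < \mathfrak{p}$, so it admits an infinite pseudo-intersection $P \subseteq \omega$. For each $\alpha$ the set $P \setminus N_\alpha$ is a finite subset of $\omega$, and since there are only countably many such subsets, a pigeonhole argument produces an uncountable $I \subseteq \omega_1$ and a single finite $F^{*} \subseteq \omega$ with $P \setminus N_\alpha = F^{*}$ for every $\alpha \in I$. Then $L' := P \setminus F^{*}$ is infinite and contained in $N_\alpha$ for every $\alpha \in I$, so every edge between $L'$ and $\{r^{*}_\alpha : \alpha \in I\}$ is Maker's, yielding a copy of $K_{\omega, \omega_1}$ in $G_M$. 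The delicate point I expect is the bookkeeping inside a phase: one must simultaneously ensure that the new edge lands in a prescribed infinite intersection and that it remains free, and the freshness of $r^{*}_\alpha$ together with the infinite repetition of each $F_k$ is exactly what makes both conditions compatible.
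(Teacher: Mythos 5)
Your proposal is correct and follows essentially the same strategy as the paper: phases of length $\omega$, each devoted to a fresh vertex of the uncountable side, building a family $\{N_\alpha:\alpha<\omega_1\}$ with the strong finite intersection property, then applying $\omega_1<\mathfrak{p}$ and a pigeonhole argument to extract the copy of $K_{\omega,\omega_1}$. The only cosmetic difference is within a phase: the paper first takes an infinite pseudo-intersection $P_\alpha$ of the countable family $\{N_\beta:\beta<\alpha\}$ and plays inside it, whereas you carry out the equivalent diagonalization over finite subfamilies explicitly.
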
  
	
	\begin{proof}
		Let $ U$ and $ V $ be  the two sides of the bipartite graph $ K_{\omega, \omega_1} $, where $ \left|U\right|=\omega $ and $ \left|V\right|=\omega_1 $.
		We denote the subgraph of $ G $ induced by the edges Maker claimed before turn $ \alpha $  by $ G^{\alpha}_M $ and we write $ N_{G^{\alpha}_{M}}(v) $ for the set of the neighbours of $v$ in this graph.
		
		During the game Maker will choose a sequence $\langle v_{\alpha} \colon \alpha < \kappa \rangle$   $\langle v_{\alpha} \colon \alpha < \omega_1 \rangle$ of distinct vertices from $V$ and a sequence $\langle N_{\alpha} \colon \alpha < \kappa\rangle$ of subsets of $U$ in such a way as to ensure that for any $\alpha < \kappa$ and a sequence $\langle N_{\alpha} \colon \alpha < \omega_1\rangle$ of subsets of $U$ in such a way as to ensure that for any $\alpha < \omega_1$
		\begin{enumerate}
			\item $N_{\alpha} \subseteq N_{G_M}^{\omega \cdot(\alpha + 1)}(v_{\alpha})$.
			\item the set $\{N_{\beta} \colon \beta \leq \alpha \}$ has the strong finite intersection property.
		\end{enumerate}
		
		Assume that  turn $ \alpha \cdot \omega $ has just begun for some $\alpha<\omega_1$ and that Maker has constructed suitable $v_{\beta}$ and $N_{\beta}$ for all $\beta < \alpha$. 
		She picks $ v_{\alpha}$ to be any fresh vertex in $V$. Using (2) for all $\beta < \alpha$, we know that the set $\{N_{\beta} \colon \beta < \alpha \}$ has the strong finite intersection property.
		Let $P_\alpha $ be an infinite pseudo-intersection of this family.
		In each of the next $ \omega $ turns, Maker claims an edge  $ \{ u, v_\alpha \} $ with $ u\in P_\alpha $.
		Let $N_{\alpha}$ be the set of all the endpoints $u \in U$ of these edges.
		It is easy to check that this construction satisfies (1) and (2) for $\alpha$. 
		
		At the end of the game $ \{N_{\alpha} \colon \alpha<\omega_1  \} $ has the strong finite intersection property and hence (by the assumption  $ \omega_1<\mathfrak{p} $) admits an infinite pseudo-intersection $ P $.
		By the definition of $ P $,  for each $ \alpha<\omega_1 $, the set  $ P\setminus N_\alpha $ is finite.
		Then there exists an uncountable $ O\subseteq \omega_1 $ and a finite $ F\subseteq P $  such that $ P\setminus N_\alpha=F $ for every $ \alpha\in O $.
		Finally,$ (P\setminus F)\cup \{ v_\alpha: \alpha\in O \} $ induces a copy of $ K_{\omega, \omega_1} $, all of whose edges have been claimed by Maker.
	\end{proof}
	\begin{rem}
		The same proof shows that Maker has a winning strategy in  $\mathfrak{MB}(K_{\omega, \kappa}, K_{\omega, \kappa}) $ for every $\kappa<\mathfrak{p}$ with $ \mathsf{cf}(\kappa)>\aleph_0 $.
	\end{rem}
	The proof of Proposition \ref{prop: Maker win MA} leads to the following positive Ramsey result:
	\begin{cor}\label{cor: monochrom}
		If $ \omega_1<\mathfrak{p} $, then any $ 2 $-colouring of the edges of $ K_{\omega, \omega_1} $ admits a monochromatic copy of $ K_{\omega, \omega_1} $.
	\end{cor}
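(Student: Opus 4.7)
The plan is to repeat the construction from the proof of Proposition \ref{prop: Maker win MA} almost verbatim, replacing Maker's act of claiming $\omega$ many edges inside a pseudo-intersection by an application of the pigeonhole principle that picks out an infinite monochromatic subset of a pseudo-intersection. Fix a $2$-colouring $c$ of the edges of $K_{\omega,\omega_1}$, and write its vertex classes as $U$ (of size $\omega$) and $V=\{v_\alpha:\alpha<\omega_1\}$.

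I would recursively construct, for each $\alpha<\omega_1$, an infinite set $N_\alpha\subseteq U$ and a colour $c_\alpha\in\{0,1\}$ satisfying \textup{(i)} every edge from $v_\alpha$ to $N_\alpha$ has colour $c_\alpha$, and \textup{(ii)} the family $\{N_\beta:\beta\le\alpha\}$ has the strong finite intersection property. At stage $\alpha$, the already constructed family has cardinality ${<}\omega_1<\mathfrak{p}$, hence admits an infinite pseudo-intersection $P_\alpha\subseteq U$ by the definition of $\mathfrak{p}$. Applying the pigeonhole principle to the $2$-colouring of the (countably many) edges between $v_\alpha$ and $P_\alpha$ yields an infinite monochromatic $N_\alpha\subseteq P_\alpha$ of some colour $c_\alpha$. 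Condition \textup{(ii)} is preserved because $N_\alpha\subseteq P_\alpha\subseteq^* N_\beta$ for every $\beta<\alpha$, so any finite intersection drawn from $\{N_\beta:\beta\le\alpha\}$ agrees with $N_\alpha$ up to a finite set and is thus infinite.

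After the recursion, $\{N_\alpha:\alpha<\omega_1\}$ has the strong finite intersection property and cardinality $\omega_1<\mathfrak{p}$, so it admits an infinite pseudo-intersection $P$. A first pigeonhole on $c_\alpha$ produces an uncountable $O_0\subseteq\omega_1$ on which $c_\alpha$ is constantly equal to some $c^*\in\{0,1\}$. Since $P$ is countable, $[P]^{<\omega}$ is countable as well, so a second pigeonhole yields an uncountable $O\subseteq O_0$ together with a fixed finite $F\subseteq P$ such that $P\setminus N_\alpha=F$ for every $\alpha\in O$. Then for each $\alpha\in O$ and every $u\in P\setminus F$ one has $u\in N_\alpha$, so the edge $\{u,v_\alpha\}$ has colour $c^*$; therefore $(P\setminus F)\cup\{v_\alpha:\alpha\in O\}$ induces a monochromatic copy of $K_{\omega,\omega_1}$.

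I do not expect a genuine obstacle: the construction is entirely dictated by the proof of Proposition \ref{prop: Maker win MA}, and the only essential ingredient beyond pigeonhole is the repeated use of the hypothesis $\omega_1<\mathfrak{p}$ both inside the recursion, where it supplies the pseudo-intersections $P_\alpha$, and once more at its end to supply $P$.
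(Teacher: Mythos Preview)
Your argument is correct, but it takes a different route from the paper's. The paper avoids the transfinite recursion entirely by fixing a free ultrafilter $\mathcal{U}$ on $U$: for each $v\in V$ exactly one of the two colour-neighbourhoods $N_r(v),N_b(v)$ lies in $\mathcal{U}$, so a single pigeonhole step produces an uncountable $V'\subseteq V$ and a colour such that $\{N_r(v):v\in V'\}\subseteq\mathcal{U}$; this family automatically has the strong finite intersection property because $\mathcal{U}$ is a free filter. The hypothesis $\omega_1<\mathfrak{p}$ is then invoked exactly once to obtain the pseudo-intersection $P$, after which the finishing pigeonhole on the finite sets $P\setminus N_r(v)$ is identical to yours. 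Your approach, by contrast, manufactures the strong finite intersection property stage by stage, spending the hypothesis $\omega_1<\mathfrak{p}$ at every step of the recursion as well as at the end. The advantage of your version is that it mirrors the Maker strategy in Proposition~\ref{prop: Maker win MA} line for line and needs no ultrafilter; the advantage of the paper's version is that it is shorter, uses $\omega_1<\mathfrak{p}$ only once, and makes transparent why the family of neighbourhoods already has the strong finite intersection property without any recursive refinement.
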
  
	\begin{proof}
		Call the colours red and blue, and call the countable and uncountable side of the original graph $U$ and $V$ respectively.
		We pick a free ultrafilter $ \mathcal{U} $ on $ U $.
		Then for each $ v\in V $ either the set $ N_r(v) $ of the red neighbours of $ v $ is in $ \mathcal{U} $ or the set $N_b(v)$ of the blue neighbours. We may assume that there is an uncountable $ V'\subseteq V $ such that $ N_r(v)\in \mathcal{U} $ for each $ v\in V' $.
		Since $ \mathcal{U} $ is a free ultrafilter, the family $ \{ N_r(v): v\in V' \} $ has the strong finite intersection property and therefore (by $ \omega_1<\mathfrak{p} $) admits  an infinite pseudo-intersection $ P $. This means that for every $ v\in V' $ the set  $ P\setminus N_r(v) $ is finite.
		Then there exists an uncountable $ V''\subseteq V' $ and finite $ F\subseteq P $ such that $ P\setminus N_r(v)=F $ for each $ v\in V'' $ and hence $ (P\setminus F)\cup V'' $ induces a red copy of $ K_{\omega, \omega_1} $.
	\end{proof}
	\begin{quest}
		Is it consistent with  ZFC$+\aleph_\omega<2^{\aleph_0}$ that Maker has a winning strategy in the game $\mathfrak{MB}(K_{\omega, \omega_\omega}, K_{\omega, \omega_\omega}) $?
	\end{quest}
	
	Theorem \ref{t: main thm bip}  is implied by the case $ \kappa=\omega $  of Corollary  \ref{cor: Breaker win}/ (\ref{i: Breaker win 1})  together with Proposition \ref{prop: Maker win MA}.
	Similarly, Theorem \ref{t: Ramsey bipart} follows from Corollaries \ref{cor: no monochrom} and \ref{cor: monochrom}. 
	
	\subsection{A winning strategy for Maker in \texorpdfstring{$\mathfrak{MB}(K_{\omega_1}, K_{\omega_1})$}{MBKww1} and \texorpdfstring{$\mathfrak{MB}(K_{\omega_2}, K_{\omega_2})$}{MBKww1}}
	
	\begin{prop}[ZF]\label{p: Maker wins measurable}
		If either $ \kappa $ is measurable or $ \kappa=\omega $, then Maker has a winning strategy in the game $\mathfrak{MB}(K_{\kappa}, K_{\kappa}) $.
	\end{prop}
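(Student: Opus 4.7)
The case $\kappa=\omega$ is established in \cite{bowler2020maker}, so I focus on the measurable case. Fix a normal $\kappa$-complete nonprincipal ultrafilter $\mathcal{U}$ on $\kappa$. The plan mimics the proof of Proposition~\ref{prop: Maker win MA}, replacing the pseudo-intersection argument (available there because $\omega_1<\mathfrak{p}$) by the iterated intersection and diagonal intersection of $\mathcal{U}$-sets (available here because $\mathcal{U}$ is $\kappa$-complete and normal).

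Concretely, during the $\kappa$ turns of the game (note $|[\kappa]^2|=\kappa$), Maker inductively chooses a sequence $\langle v_\alpha : \alpha<\kappa\rangle$ of distinct vertices of $\kappa$ together with a decreasing sequence $\langle U_\alpha : \alpha<\kappa\rangle$ of $\mathcal{U}$-sets, such that $v_\alpha \in \bigcap_{\beta<\alpha} U_\beta$ for every $\alpha$ (which is possible since $\kappa$-completeness makes the intersection $\mathcal{U}$-large) and such that for every $u \in U_\alpha$ the edge $\{v_\alpha, u\}$ lies in $E(G_M)$ at the end of play. Given these invariants, $\{v_\alpha : \alpha<\kappa\}$ is a Maker-clique: whenever $\beta<\alpha$, the first invariant gives $v_\alpha \in U_\beta$ and then the second (applied to $\beta$ with $u=v_\alpha$) gives $\{v_\beta,v_\alpha\}\in E(G_M)$.

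To implement the strategy, schedule the $\kappa$ Maker-turns via a bijection $\pi\colon\kappa\to\kappa\times\kappa$ chosen so that $\pi^{-1}(\alpha,0)<\pi^{-1}(\alpha,\delta)$ for every $\delta>0$. On turn $\pi^{-1}(\alpha,0)$ Maker declares $v_\alpha$ to be a vertex of $\bigcap_{\beta<\alpha}U_\beta$ chosen fresh with respect to the play so far (possible because fewer than $\kappa$ vertices have been touched, and such a small set lies outside $\mathcal{U}$), and provisionally sets $U_\alpha\subseteq\bigcap_{\beta\leq\alpha}U_\beta$ to a suitable $\mathcal{U}$-set. On turn $\pi^{-1}(\alpha,\delta)$ for $\delta>0$, she claims the $\delta$-th edge of an enumeration of $\{\{v_\alpha,u\}:u\in U_\alpha\}$, shrinking $U_\alpha$ along the way to discard any $u$ which Breaker has snatched in the meantime.

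\textbf{Main obstacle.} The crux is to ensure each $U_\alpha$ remains in $\mathcal{U}$ despite Breaker's $\kappa$ interventions: Breaker can in principle concentrate his entire budget on a single $v_\alpha$, but not on $\mathcal{U}$-many of them at once. I expect the argument to conclude with a Fodor-style step (using the normality of $\mathcal{U}$) that selects a $\mathcal{U}$-large set of indices $\alpha$ for which $U_\alpha$ really does stay $\mathcal{U}$-large; the diagonal intersection $\Delta_\alpha U_\alpha \in\mathcal{U}$ then supplies the $\kappa$-many vertices that induce the required $K_\kappa$ in $G_M$. Synchronizing Maker's schedule $\pi$ with the nonuniform pressure Breaker can place on individual $v_\alpha$'s—so that this Fodor-type extraction succeeds—is the technically delicate step I anticipate to be the heart of the proof.
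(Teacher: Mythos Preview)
Your proposal is not a proof: you correctly isolate the main obstacle and then explicitly defer it, calling the missing synchronisation/Fodor step ``the heart of the proof''. That gap is genuine and does not close along the lines you sketch. To make $U_\alpha\in\mathcal U$ you need $\kappa$ Maker-moves devoted to $v_\alpha$, and you need this for $\kappa$ many $\alpha$, yet the entire game has only $\kappa$ Maker-moves; under any bijective schedule $\pi$, once $v_\alpha$ is revealed there remain $\kappa$ turns on which Maker is occupied elsewhere, and Breaker is free to claim $\{v_\alpha,u\}$ for a $\mathcal U$-large set of $u$. Since $\kappa\cdot\kappa=\kappa$, nothing in the cardinal arithmetic stops Breaker from doing this to every $\alpha$ simultaneously, so the hoped-for Fodor extraction has no purchase. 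There is also a coherence problem one level down: you ask $v_\alpha$ to be fresh and at the same time to lie in $\bigcap_{\beta<\alpha}U_\beta$; if $U_\beta$ records realised Maker-neighbours then a fresh vertex lies in none of them, while if $U_\beta$ is only a provisional target set then Breaker can evict $v_\alpha$ from $U_\beta$ immediately after $v_\alpha$ is revealed by claiming $\{v_\beta,v_\alpha\}$. Finally, you invoke a \emph{normal} ultrafilter, but the proposition is stated in ZF, where measurability gives only $\kappa$-completeness; the standard route from measurable to normal uses choice.

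The paper sidesteps all of this with a genuinely different idea. Maker does not try to give any vertex a $\mathcal U$-large neighbourhood. Instead she grows a sub-binary Hausdorff tree $T\subseteq\kappa$: each fresh vertex $v$ is joined by Maker to a maximal $<_T$-chain and then inserted as a new leaf above it. Sub-binarity is the point: after Maker plays $\{v,u\}$, Breaker's single reply can spoil at most one of the (at most two) subtrees below $u$, so Maker can always continue into the other. Hence each vertex is fully processed in at most its eventual height, i.e.\ in fewer than $\kappa$ moves, and no interleaved scheduling is needed. The ultrafilter is invoked only once, after the tree is built, and only $\kappa$-completeness is used: it guarantees a $\kappa$-branch, and any two comparable tree-vertices are joined by a Maker-edge by construction.
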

	\begin{proof}
		A sub-binary Hausdorff tree is a set theoretic tree $ T $ in which each vertex has at most two children and no two vertices at any limit level have the same set of predecessors.
		
		During the game Maker  builds  a sequence $ \left\langle T_\alpha: \alpha\leq \kappa  \right\rangle $ of sub-binary Hausdorff trees with root $ 0 $  and   $ T_\alpha\subseteq \kappa $ of height at most $1+\alpha$ such that 
		\begin{enumerate}[label=(\alph*)]
			\item\label{a condition}
			\begin{enumerate}[label=(\roman*)]
				\item $ T_0=\{ 0 \} $,
				\item  $ T_{\alpha+1} $ is obtained from $ T_\alpha $ by inserting a new maximal element,
				\item $ T_\alpha=\bigcup_{\beta<\alpha}T_\beta $ if $ \alpha $ is a limit ordinal,
			\end{enumerate} 
			\item\label{b condition} for every distinct $ <_{T_\alpha} $-comparable $ u,v\in T_\alpha $, the edge $ \{ u,v \}  $ is claimed by Maker in the game.
		\end{enumerate}
		
		Suppose that $ \alpha=\beta+1 $ and $ T_\beta $ is already defined.
		Maker picks the smallest  ordinal  $ v $ such that no edge incident with $v$ is claimed and claims  edge $ \{ 0, v \} $.
		Then, for as long as she can, on each following turn she connects $ v $ to vertices in  $ T_\beta $ in such a way that: 
		\begin{enumerate}
			\item\label{first cond} she maintains that the current neighbourhood of $ v $ in her graph is a downward closed chain in $ T_\beta $,
			\item\label{second cond} whenever she claims some $ \{ u,v \} $, then Breaker  has no edge between  $ v $ and the subtree $ T_{\beta, u} $ of $ T_\beta $ rooted at $ u $.
		\end{enumerate}
		Note that, at any step at which $v$ has a largest Maker-neighbour in $T_\beta$ and this neighbour has two children in $ T_\beta $, she can proceed.
		Moreover, she can also proceed even if there is no such largest Maker-neighbour as long as there is some element of $T_{\beta}$ whose predecessors are precisely the Maker-neighbours of $v$ in $T_{\beta}$.
		Thus, if Maker is unable to continue this process with $ v $, then either $ v $ has a largest Maker-neighbour in $ T_\beta $ which has at most one child or else there is no vertex in $T_{\beta}$ with precisely the Maker-neighbours of $u$ as its predecessors.
		In either case we can define $ T_{\beta+1} $  by adding $ v $ to $ T_\beta $ with its current set of Maker-neighbours as its predecessors, and Maker starts a new phase with a new fresh vertex.
		
		It is enough to show that there is a  $ \kappa $-branch $ B $ in $ T_\kappa $, because then $ G_M[B]  $ is a copy of $ K_{\kappa} $ by \ref{b condition}.
		Since $ \left|T_\kappa\right|=\kappa $ by \ref{a condition}, we can fix a $ \kappa $-complete free ultrafilter $ \mathcal{U} $ on $ T_\kappa $. 
		
		By transfinite recursion  we build a $ \kappa $-branch.
		Let $ v_0:=0 $.
		Suppose that there is an $ \alpha<\kappa $ such that the $ <_{T_\kappa} $-increasing  sequence $ \left\langle v_\beta: \beta<\alpha  \right\rangle $  is already defined and for each $ \beta<\alpha $,  $ T_{\kappa, v_\beta}\in \mathcal{U} $.
		If $ \alpha $ is a limit ordinal, then since $\bigcap_{\beta<\alpha}T_{v_\beta}\in \mathcal{U} $ by  the $ \kappa $-completeness of $ \mathcal{U} $, there is at least one vertex of $T$ with all $v_{\beta}$ as predecessors.
		We define $ v_\alpha $ to be the unique minimal such vertex, so that $ T_{v_\alpha}=\bigcap_{\beta<\alpha}T_{v_\beta}\in \mathcal{U} $.
		If  $ \alpha=\beta+1 $, then $ T_{\kappa, v_\beta}\in \mathcal{U} $ by assumption.
		Since $ T_\kappa $ is sub-binary,  $ v_\beta $ has a unique child $ v $ satisfying $ T_{\kappa, v}\in \mathcal{U} $ and we let $ v_{\beta+1}:=v $.
		The recursion is done and $\{ v_\alpha: \alpha<\kappa \} $ is clearly a $ \kappa $-branch.
	\end{proof}
	
	We remark that this strategy is quite flexible and deals also with a number of variants of the Maker-Breaker game.
	For example, if Breaker is allowed $k<\omega$ moves for every move that Maker picks, simply take a sub-$(k+1)$-regular Hausdorff tree, in which every node has at most $k+1$ children.
	Furthermore, if in addition Breaker is allowed to go first in every turn, simply weaken the Hausdorff assumption to the requirement that at most $k+1$ vertices at a limit level have the same set of predecessors.
	
	Since $ \omega_1 $ and $ \omega_2 $ are measurable cardinals under ZF+DC+AD (\cite{kanamori2008higher}*{Theorems 28.2 and 28.6}), the cases $ \kappa\in \{\omega, \omega_1 \} $ of Corollary~\ref{cor: Breaker win}/(\ref{i: Breaker win 2})  and the cases $\kappa \in \{\omega_1, \omega_2\}$ of Proposition \ref{p: Maker wins measurable} together imply Theorem \ref{t: main thm}.
	
	\subsection{Breaker may lose the \texorpdfstring{$\mathfrak{MB}(K_{\omega_1}, K_{\mathsf{club}})$}--game}
	\begin{prop}\label{prop: strategy steal}
		Under ZF+DC+AD, Breaker does not have a winning strategy in the game $ \mathfrak{MB}(K_{\omega_1}, K_{\mathsf{club}}) $.
	\end{prop}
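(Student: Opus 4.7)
The plan is to combine a strategy-stealing trick with the Ramsey-type partition property for $\omega_1$ that holds under ZF+DC+AD. Assuming towards a contradiction that Breaker has a winning strategy $\sigma$, I would define a Maker strategy $\tau$ by running an auxiliary copy of the game internally, in which Maker plays the role of Breaker following $\sigma$. Concretely, on each turn Maker interprets the actual Breaker's previous move as the simulated Maker move, consults $\sigma$ for the simulated Breaker response, and plays that edge in the real game; if the suggested edge has already been claimed by Maker as an earlier auxiliary move, she instead plays any currently unclaimed edge and defers the bookkeeping to a later step. The standard invariant is then maintained: Maker's real edge set contains the simulated-Breaker edge set, and Breaker's real edge set equals the simulated-Maker edge set. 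Since $\sigma$ is winning in the simulation, the simulated Maker never builds a $K_{\mathsf{club}}$, and so the \emph{actual} Breaker has no $K_{\mathsf{club}}$ in his edges at the end of play.

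At this point I would invoke the partition property of $\omega_1$. Under ZF+DC+AD the cardinal $\omega_1$ is measurable and the club filter is a normal ultrafilter on $\omega_1$ (see \cite{kanamori2008higher}*{Theorem 28.2}); this implies that every $2$-colouring of $[\omega_1]^2$ admits a homogeneous set in the club filter, and hence a homogeneous club $C\subseteq\omega_1$. Applied to the final $2$-colouring of $E(K_{\omega_1})$ by player, the club $C$ cannot be monochromatic in Breaker's colour by the previous paragraph, so it must be monochromatic in Maker's -- meaning Maker has built a copy of $K_{\mathsf{club}}$. This contradicts $\sigma$ being a winning strategy for Breaker.

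The main delicate point is the transfinite bookkeeping for $\sigma$-suggestions that clash with previously claimed edges: in a game of length $\omega_1$ one has to ensure that the internal simulation stays well-defined through all successor and limit stages and that every deferred auxiliary move eventually gets matched with a legitimate $\sigma$-response. The other step that deserves explicit verification is that the homogeneous set supplied by the partition property can be taken to contain a club (and not merely to have cardinality $\omega_1$), which is precisely the content of the club filter being a normal ultrafilter together with a Rowbottom-style argument.
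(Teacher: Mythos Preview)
Your proposal is correct and follows essentially the same route as the paper: a strategy-stealing argument combined with the fact (under ZF+DC+AD) that the club filter on $\omega_1$ is a normal ultrafilter, so every $2$-colouring of $[\omega_1]^2$ has a homogeneous club. The paper is slightly terser---Maker plays an arbitrary edge at turn $0$ and at each limit turn, and otherwise feeds Breaker's previous move into $\sigma$---whereas you spell out the collision bookkeeping more explicitly, but the substance is identical.
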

	\begin{proof}
		First of all, the club filter  on $ \omega_1 $ is a countably complete free ultrafilter under ZF+DC+AD (this is explicit in the proof of \cite{kanamori2008higher}*{Theorem 28.2}).
		Furthermore, it is normal \cite{dorais2021bounding}*{Proposition 4.1}.
		Thus for any 2-colouring of $ [\omega_1]^{2} $ there exists a colour with a monochromatic $ K_{\mathsf{club}} $ (the standard proof of this for arbitrary normal ultrafilters uses only ZF, see \cite{jech2003set}*{Theorem 10.22}).
		It follows that if Breaker successfully prevents Maker from building a $ K_{\mathsf{club}} $, then he necessarily builds a $ K_{\mathsf{club}} $ himself. 
		
		Suppose for a contradiction that Breaker has a winning strategy.
		We shall show that Maker can ``steal'' this winning strategy. 
		Indeed, Maker picks an arbitrary edge in turn $ 0 $ as well as in each limit turn while in successor turns she pretends to be Breaker and claims edges according to his winning strategy.
		This is a winning strategy for Maker, a contradiction.
	\end{proof}
	Theorem \ref{t: main thm club} follows from the case $ \kappa=\omega $ of Corollary~\ref{cor: Breaker win}/(\ref{i: Breaker win 3}) and Proposition \ref{prop: strategy steal}.

	\begin{rem}
		The same strategy stealing argument shows that if $\kappa$ is a weakly compact cardinal, then Breaker does not have a winning strategy in the game $ \mathfrak{MB}(K_{\kappa}, K_{\kappa}) $.
	\end{rem}
	
	\begin{rem}
		We did not really use the full power of AD, just some consequences that are weaker in the sense of consistency strength than AD itself.
		The axiom-system ZF+DC+``$\omega_1$ is measurable'' is equiconsistent with ZFC+``there exists a measurable cardinal''  (see \cite{jech1968omega}).
		The club filter being an ultrafilter is a strictly stronger assumption, for more details see  p. 3 in \cite{dorais2021bounding}.
	\end{rem}

	\bibliographystyle{unsrtnat}
	\bibliography{ref}
\end{document}